\renewcommand\thesection{\Roman{section}} % Roman numerals for the sections
\titleformat{\section}[block]{\large\scshape\centering}{\thesection.}{1em}{} % Change the look of the section titles
\renewcommand{\thesection}{\arabic{section}}
\newtheorem{thm}{Theorem}[section]
\newtheorem{lem}{Lemma}[section]
\numberwithin{equation}{section}
\newtheorem{cor}{Corollary}[section]
\newtheorem{con}{Conjecture}
\theoremstyle{plain}
\begin{document}
%\maketitle
\thispagestyle{empty}

%%%%%%% Please send the following part %%%%%%%%%%%%%%%%

%%%%%%%%%%%%%%%% HEADER %%%%%%%%%%%%%%%%

\title{Lie higher derivations of arbitrary triangular algebras \\     %title first line
  %OF PAPERS           %title second line
}

%\medskip

\author{Mohammad Ashraf and Mohammad Afajal Ansari}

\address{Mohammad Ashraf, Department of Mathematics,
Aligarh Muslim University,
Aligarh-202002 India}
\email{\href{mailto:mashraf80@hotmail.com}{mashraf80@hotmail.com}}

\address{Mohammad Afajal Ansari, Department of Mathematics, Aligarh Muslim University,
Aligarh-202002 India}
\email{\href{mailto:afzalgh1786@gmail.com}{afzalgh1786@gmail.com}}

%\date {\today}
\maketitle

%\bigskip

%%%%%%%%TEXT %%%%%%%%%%%%%

\begin{abstract}
Motivated by the works of Wang [Y. Wang, \textit{Lie (Jordan) derivations of arbitrary triangular algebras,} Aequationes Mathematicae, \textbf{93} (2019),  1221-1229] and Moafian et al. [F. Moafian and H. R. Ebrahimi Vishki,  \textit{Lie higher derivations on triangular algebras revisited,} Filomat, \textbf{30}(12) (2016),  3187-3194.], we shall study Lie higher derivations of arbitrary triangular algebras. In fact, it is shown that every Lie higher derivation on an arbitrary triangular algebra is proper, using the notion of maximal left (right) ring of quotients.
\end{abstract}
\vspace{.3cm}
{\bf 2010 Mathematics Subject Classification:} {16W25, 15A78, 16R60}\\
{\bf Keywords:} {\it Triangular algebra, Lie higher derivation, maximal left (right) ring of quotients.}

\thispagestyle{empty}

\section{Introduction}

Let $\mathcal{R}$ be a commutative ring with identity, $\mathcal{A}$ be an algebra over $\mathcal{R}$ and $\mathcal{Z}(\mathcal{A})$ be the center of $\mathcal{A}$. Let $[x,y]=xy-yx$ denote the commutator of elements $x,y\in\mathcal{A}.$ An $\mathcal{R}$-linear mapping $\Delta:\mathcal{A}\rightarrow \mathcal{A}$ is called a \textit{derivation} if $\Delta(xy)=\Delta(x)y+x\Delta(y)$ holds for all $x, y\in \mathcal{A}$. An $\mathcal{R}$-linear mapping $\Delta:\mathcal{A}\rightarrow \mathcal{A}$ is said to be a \textit{Lie derivation} if
$\Delta([x, y]) = [\Delta, y] + [x, \Delta(y)]$ for all $x, y\in \mathcal{A}$  holds for all $x, y\in \mathcal{A}$. The structure of Lie derivations on various rings and algebras has been extensively studied  (see  \cite{M64,Mi73,Ch03,YZ10,dw12}).

\par The study of derivations were further extended to higher derivations (see \cite{FH02,FH02(1),LS12,Qi13,QH10,WX11,XW12} and references therein). Let $\mathbb{N}$ be the set of nonnegative integers and $\mathcal{D}=\{d_n\}_{n\in\mathbb{N}}$ be a family of $\mathcal{R}$-linear mappings $d_n:\mathcal{A}\rightarrow\mathcal{A}$ such that $d_0=id_{\mathcal{A}},$ the identity mapping on $\mathcal{A}.$ Then $\mathcal{D}$ is said to be
\begin{itemize}
\item [$(i)$] a higher derivation if $d_n(xy)=\sum\limits_{i+j=n}d_i(x)d_j(y)$ for all $x,y \in \mathcal{A}$ and  $n \in \mathbb{N}.$
\item [$(ii)$] a Lie higher derivation if
$d_n([x,y])=\sum\limits_{i+j=n}[d_i(x),d_j(y)]$ for all $x,y \in \mathcal{A}$ and  $n \in \mathbb{N}.$
\end{itemize}

\par Note that every higher derivation is Lie higher derivation, however, the converse is not true, in general. If $\mathcal{D}=\{d_n\}_{n\in \mathbb{N}}$ is a higher derivation on $\mathcal{A}$ and $\mathcal{H}=\{h_n\}_{n\in \mathbb{N}}$ is a sequence of $\mathcal{R}$-linear mappings $h_n:\mathcal{A}\rightarrow\mathcal{Z}(\mathcal{A})$ such that $h_n([x,y])=0$ for all $x,y\in \mathcal{A}$ and $n \in \mathbb{N},$ then $\mathcal{D}+\mathcal{H}=\{d_{n}+h_n\}_{n\in \mathbb{N}}$ is a Lie higher derivation on $\mathcal{A},$ which is not necessarily a higher derivation. A Lie higher derivation of the above kind is called proper. The natural problem that one considers in this context is under what conditions a Lie higher derivation on an algebra is proper.

\par Several authors have made important contributions to the related topics (see \cite{XW12,LS12,QH10,MEv16} and the references therein). In the year 2000, Cheung \cite{Ch00} initiated the study of linear mappings on triangular algebras. Furthermore, Cheung \cite{Ch03} obtained some sufficient conditions under which every Lie derivation $\mathcal{L}$ on a triangular algebra $\mathfrak{A}$ can be written as $\mathcal{L}=d+h,$ where $d:\mathfrak{A}\rightarrow\mathfrak{A}$ is a derivation and $h:\mathfrak{A}\rightarrow \mathcal{Z}(\mathfrak{A})$ is a linear mapping such that $h([x,y])=0$ for all $x,y\in\mathfrak{A}.$ This result has been generalized in different ways (see \cite{aj17,YZ10,LS12,Qi13,MEv16,B11,XW12(1)}). Li and Shen \cite{LS12} extended Cheung's result to Lie higher derivations of triangular algebras by proving the following statement. Let $\mathfrak{A}=\mathrm{{Tri}}(\mathcal{A},\mathcal{M},\mathcal{B})$ be a triangular algebra such that $\pi_{\mathcal{A}}(\mathcal{Z}(\mathfrak{A}))= \mathcal{Z}(\mathcal{A})$ and $\pi_{\mathcal{B}}(\mathcal{Z}(\mathfrak{A}))=\mathcal{Z}(\mathcal{B}),$ where $\pi_{\mathcal{A}},\pi_{\mathcal{B}}$ are natural projections of $\mathfrak{A}.$ Then every Lie higher derivation $\mathcal{L}=\{L_n\}_{n\in\mathbb{N}}$ of $\mathfrak{A}$ is proper, that is, for each $n\in\mathbb{N},$ $L_n=d_n+h_n,$ where $\{d_n\}_{n\in\mathbb{N}}$ is a higher derivation on  $\mathfrak{A}$ and $\{h_n\}_{n\in\mathbb{N}}$ is a sequence of linear mappings $h_n:\mathfrak{A}\rightarrow \mathcal{Z}(\mathfrak{A})$ such that $h_n([x,y])=0$ for all $x,y\in\mathfrak{A}$ and $n\in\mathbb{N}.$ Moafian and Ebrahimi Vishki \cite{MEv16} given the structure of Lie higher derivations on triangular algebras by the entries of matrices and obtained a similar conclusion as shown by Li and Shen \cite{LS12}.

\par In the year 1956, Utumi \cite{U56} introduced the concept of the maximal left ring of quotients and proved that every unital ring has a maximal left (right) ring of quotients. For a detailed study of maximal left (right) ring of quotients the reader is referred to \cite{BMM96, BCM07}.  Eremita \cite{E15} explored functional identities of degree $2$ for a more general class of triangular rings using the notion of maximal left ring of quotient.  Wang \cite{W15} considered functional identities of degree $2$ in arbitrary triangular rings. Furthermore, in \cite{W19}, Wang  constructed a triangular algebra from a given triangular algebra, using the notion of maximal left (right) ring of quotients and gave a description of Lie derivations on arbitrary triangular algebras through the constructed triangular algebra. Recently, Ashraf et al. \cite{aaa21} characterized generalized Lie triple derivation on arbitrary triangular algebras.

\par Motivated by the above works, in this paper, we shall study Lie higher derivations on arbitrary triangular algebras which generalizes the result of Wang \cite{W19}. In fact, it is shown that every Lie higher derivation on an arbitrary triangular algebra is proper. It is worth to mention that our characterization of Lie higher derivations on triangular algebras (Theorem \ref{thm1}) is different from that in \cite{LS12,MEv16}.

\section{Preliminaries}
Let $\mathcal{A}$ and $\mathcal{B}$ be unital algebras over $\mathcal{R}$ and $\mathcal{M}$ be an $(\mathcal{A},\mathcal{B})$-bimodule which is faithful as a left $\mathcal{A}$-module and also as a right $\mathcal{B}$-module. The $\mathcal{R}$-algebra
$$\mathfrak{A}=\mathrm{{Tri}}(\mathcal{A},\mathcal{M},\mathcal{B})=\bigg\{\left[
                                                           \begin{array}{cc}
                                                             a & m \\
                                                             0 & b \\
                                                           \end{array}
                                                         \right]~~\vline~~ a\in\mathcal{A}, m\in\mathcal{M},b\in\mathcal{B}\bigg\}$$
under the usual matrix operations is called a triangular algebra (see \cite{Ch00} for details). Some classical examples of triangular algebras are upper triangular algebras, block upper triangular algebras and nest algebras. Define two natural projections $\pi_{\mathcal{A}}:\mathfrak{A}\rightarrow \mathcal{A}$ and $\pi_{\mathcal{B}}:\mathfrak{A}\rightarrow \mathcal{B}$ by
$\pi_{\mathcal{A}}\left(\left[
\begin{array}{cc}
a & m \\
0 & b \\
\end{array}
\right]\right)=a$ and $\pi_{\mathcal{B}}\left(\left[
\begin{array}{cc}
a & m \\
0 & b \\
\end{array}
\right]\right)=b$.\\
In view of \cite[Theorem 1.4.4]{Ch00}, the center of $\mathfrak{A}$ is given by
$$\mathcal{Z}(\mathfrak{A})=\bigg\{\left[
\begin{array}{cc}
a & 0 \\
0 & b \\
\end{array}
\right]~~\vline~~ a\in\mathcal{A},b\in\mathcal{B}, am=mb ~\text{for all}~  m\in\mathcal{M}\bigg\}.$$
Moreover, $\pi_{\mathcal{A}}(\mathcal{Z}(\mathfrak{A}))\subseteq \mathcal{Z}(\mathcal{A})$ and $\pi_{\mathcal{B}}(\mathcal{Z}(\mathfrak{A}))\subseteq \mathcal{Z}(\mathcal{B})$, and there exists a unique algebra isomorphism $\eta:\pi_{\mathcal{A}}(\mathcal{Z}(\mathfrak{A}))\rightarrow \pi_{\mathcal{B}}(\mathcal{Z}(\mathfrak{A}))$ such that $am=m\eta(a)$ for all $m\in \mathcal{M}$ and $a\in\pi_{\mathcal{A}}(\mathcal{Z}(\mathfrak{A})).$
\par Let $1_{\mathcal{A}}$ and $1_{\mathcal{B}}$ be the identity elements of $\mathcal{A}$ and $\mathcal{B}$, respectively. Set
$e=\left[
                                                           \begin{array}{cc}
                                                             1_{\mathcal{A}} & 0 \\
                                                             0 & 0 \\
                                                           \end{array}
                                                         \right]$
                                                         and $f=I-e=\left[
                                                           \begin{array}{cc}
                                                             0 & 0 \\
                                                             0 & 1_{\mathcal{B}} \\
                                                           \end{array}
                                                         \right]$.
Then $\mathfrak{A}$ can be written as $\mathfrak{A}=e\mathfrak{A}e\oplus e\mathfrak{A}f\oplus f\mathfrak{A}f$, where $e\mathfrak{A}e$ is a subalgebra of $\mathfrak{A}$ isomorphic to $\mathcal{A}$, $f\mathfrak{A}f$ is a subalgebra of $\mathfrak{A}$ isomorphic to $\mathcal{B}$ and $e\mathfrak{A}f$ is a $(e\mathfrak{A}e,f\mathfrak{A}f)$-bimodule isomorphic to the bimodule $\mathcal{M}$.
\par Throughout the paper, we shall use the following notations: For any ring (algebra) $\mathcal{A}$, let $Q_{\ell}(\mathcal{A})$ (resp. $Q_r(\mathcal{A})$) denotes the maximal left (resp. right) ring of quotients of $\mathcal{A}.$ The center of $Q_{\ell}(\mathcal{A})$ (resp. $Q_r(\mathcal{A})$) is denoted by $C_{\ell}(\mathcal{A})$ ( resp. $C_r(\mathcal{A})$).

We now state some known results which will be used in the proof of the main result of this article.

\begin{lem}\cite[Propositions 2.1 $\&$ 2.2]{W19}\label{lem2.1}
Let $\mathfrak{A}=\mathrm{{Tri}}(\mathcal{A}, \mathcal{M},\mathcal{B})$ be a triangular algebra. Then, the following assertions hold:
\begin{itemize}
\item [$(i)$] $\mathcal{Z}(\mathfrak{A})=\{c\in e\mathfrak{A}e+f\mathfrak{A}f~~~\vline~~~ cexf=exfc$ for all x $\in \mathfrak{A}\}$,
\item [$(ii)$] $C_{\ell}(\mathfrak{A})=\{q\in eQ_{\ell}(\mathfrak{A})e+fQ_{\ell}(\mathfrak{A})f~~~\vline~~~ qexf=exfq$ for all x $\in \mathfrak{A}\}$,
\item [$(iii)$] $C_r(\mathfrak{A})=\{q\in eQ_r(\mathfrak{A})e+fQ_r(\mathfrak{A})f~~~\vline~~~ qexf=exfq$ for all x $\in \mathfrak{A}\}$,
\item [$(iv)$] $\mathcal{Z}(e\mathfrak{A}e)\subseteq C_{\ell}(\mathfrak{A})e$,
\item [$(v)$] $\mathcal{Z}(f\mathfrak{A}f)\subseteq C_r(\mathfrak{A})f$,
\item [$(vi)$] there exists a unique algebra isomorphism $\tau_{\ell}:C_{\ell}(\mathfrak{A})e\rightarrow C_{\ell}(\mathfrak{A})f$ such that $\lambda e.exf=exf.\tau_{\ell}(\lambda e)$ for all $x \in \mathfrak{A}, \lambda \in C_{\ell}(\mathfrak{A})$. Moreover, $\tau_{\ell}(\mathcal{Z}(\mathfrak{A})e)=\mathcal{Z}(\mathfrak{A})f$,
\item [$(vii)$] there exists a unique algebra isomorphism $\tau_r:C_r(\mathfrak{A})e\rightarrow C_r(\mathfrak{A})f$ such that $\lambda e.exf=exf.\tau_r(\lambda e)$ for all $x \in \mathfrak{A}, \lambda \in C_r(\mathfrak{A})$. Moreover, $\tau_r(\mathcal{Z}(\mathfrak{A})e)=\mathcal{Z}(\mathfrak{A})f$.
    \end{itemize}
\end{lem}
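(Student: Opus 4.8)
The plan is to prove the seven assertions in the stated order, since (iv)--(vii) build on (i)--(iii). For (i): if $c\in\mathcal{Z}(\mathfrak{A})$ then $c$ commutes with $e$, which forces $ecf=fce=0$, so $c\in e\mathfrak{A}e+f\mathfrak{A}f$, and $c$ commutes with every $exf$; conversely, if $c=c_1+c_3$ with $c_1=ece$, $c_3=fcf$ and $c_1m=mc_3$ for all $m\in e\mathfrak{A}f$, then for $a_1\in e\mathfrak{A}e$ one computes $(c_1a_1-a_1c_1)m=c_1(a_1m)-a_1(c_1m)=(a_1m)c_3-a_1(mc_3)=0$, so faithfulness of $e\mathfrak{A}f$ as a left $e\mathfrak{A}e$-module gives $c_1\in\mathcal{Z}(e\mathfrak{A}e)$; symmetrically $c_3\in\mathcal{Z}(f\mathfrak{A}f)$, and a Peirce computation then shows $c$ is central in $\mathfrak{A}$.

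For (ii) and (iii) I would first invoke the standard description $C_\ell(\mathfrak{A})=\mathcal{Z}(Q_\ell(\mathfrak{A}))=\{q\in Q_\ell(\mathfrak{A}):qa=aq\text{ for all }a\in\mathfrak{A}\}$ (the nonobvious inclusion uses that $\{a\in\mathfrak{A}:aq'\in\mathfrak{A}\}$ is a dense left ideal and that dense left ideals have zero right annihilator in $Q_\ell(\mathfrak{A})$; see \cite{BMM96,BCM07}). Granting this, (ii) proceeds exactly as (i): a central $q$ commutes with $e$ and with each $exf$; conversely, for $q=q_1+q_3\in eQ_\ell(\mathfrak{A})e+fQ_\ell(\mathfrak{A})f$ with $q_1m=mq_3$ for all $m\in e\mathfrak{A}f$, expanding $qa-aq$ in Peirce components reduces matters to $[q_1,a_1]=0$ and $[q_3,a_3]=0$, and the hypothesis already yields $[q_1,a_1]m=0=m[q_3,a_3]$ for all $m\in e\mathfrak{A}f$. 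Turning these into genuine vanishing is the crucial point: for $s=[q_1,a_1]\in eQ_\ell(\mathfrak{A})e$, choose a dense left ideal $D$ with $Ds\subseteq\mathfrak{A}$; using $f\mathfrak{A}e=0$ one finds $ds\in e\mathfrak{A}e$ and $ds$ annihilates $e\mathfrak{A}f$ for each $d\in D$, so $ds=0$ by faithfulness of $e\mathfrak{A}f$ as a left module and then $s=0$ by density of $D$; the companion fact $m[q_3,a_3]=0\Rightarrow[q_3,a_3]=0$ is handled the same way, using instead faithfulness of $e\mathfrak{A}f$ as a right $f\mathfrak{A}f$-module. Part (iii) is the mirror image, with $Q_r(\mathfrak{A})$ and dense right ideals in place of $Q_\ell(\mathfrak{A})$ and dense left ideals.

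For (iv) and (v) the idea is to manufacture the required central element. Given $z\in\mathcal{Z}(e\mathfrak{A}e)$, I would take the dense left ideal $e\mathfrak{A}=e\mathfrak{A}e\oplus e\mathfrak{A}f$ and the map $\phi:e\mathfrak{A}\to\mathfrak{A}$, $\phi(a_1+a_2)=a_1z+za_2$ (with $a_1\in e\mathfrak{A}e$, $a_2\in e\mathfrak{A}f$); the only nontrivial identity needed for $\phi$ to be a left $\mathfrak{A}$-module map, namely $b_1za_2=zb_1a_2$, is exactly the centrality of $z$ in $e\mathfrak{A}e$. Then $\phi$ determines $q\in Q_\ell(\mathfrak{A})$ with $aq=\phi(a)$ on $e\mathfrak{A}$, so $eq=z$ and $mq=zm$ for $m\in e\mathfrak{A}f$; comparing Peirce components in $mq=zm$ yields $eqf=0$ and $m(fqe)=0$ for all $m$, and the latter together with $f\mathfrak{A}e=0$ gives $fqe=0$, so $q\in eQ_\ell(\mathfrak{A})e+fQ_\ell(\mathfrak{A})f$. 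Since $qm=mq=zm$ for all such $m$, part (ii) gives $q\in C_\ell(\mathfrak{A})$, and as $q$ is then central it commutes with $e$, so $z=eq=qe\in C_\ell(\mathfrak{A})e$. Part (v) is obtained symmetrically, starting from $z\in\mathcal{Z}(f\mathfrak{A}f)$, the dense right ideal $\mathfrak{A}f$, and $Q_r(\mathfrak{A})$.

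For (vi) and (vii): by (ii) every $\lambda\in C_\ell(\mathfrak{A})$ splits as $\lambda=\lambda e+\lambda f$ with $\lambda e=e\lambda e$, $\lambda f=f\lambda f$, so I would define $\tau_\ell(\lambda e)=\lambda f$. Decomposing $\lambda\cdot exf=exf\cdot\lambda$ gives the asserted relation $\lambda e\cdot exf=exf\cdot\tau_\ell(\lambda e)$; taking $\lambda e=0$ in it yields $e\mathfrak{A}f\cdot\lambda f=0$, and the dense-ideal-plus-faithfulness argument of the previous paragraph forces $\lambda f=0$, so $\tau_\ell$ is well defined; the symmetric argument gives injectivity, surjectivity is immediate, and $\tau_\ell$ is multiplicative because $\lambda e\cdot\mu e=(\lambda\mu)e$ and $\lambda f\cdot\mu f=(\lambda\mu)f$. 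Uniqueness holds since a second map $\sigma$ with the same property would satisfy $e\mathfrak{A}f\cdot(\tau_\ell(\lambda e)-\sigma(\lambda e))=0$, forcing $\sigma=\tau_\ell$. Finally $\mathcal{Z}(\mathfrak{A})\subseteq C_\ell(\mathfrak{A})$, and for $c=\left[\begin{smallmatrix}a&0\\0&b\end{smallmatrix}\right]\in\mathcal{Z}(\mathfrak{A})$ one has $\tau_\ell(ce)=cf$, whence $\tau_\ell(\mathcal{Z}(\mathfrak{A})e)=\mathcal{Z}(\mathfrak{A})f$; part (vii) is the mirror statement for $Q_r(\mathfrak{A})$. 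The main obstacle throughout is the step flagged in the second paragraph: reconciling the left ring of quotients with the right-hand annihilation conditions that the $f$-corner imposes on $e\mathfrak{A}f$; the uniform remedy is that a dense left ideal already respects the triangular splitting, so multiplying the offending element into $\mathfrak{A}$ brings the bimodule faithfulness of $e\mathfrak{A}f$ to bear.
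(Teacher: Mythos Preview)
The paper does not prove this lemma at all: it is quoted verbatim from Wang \cite[Propositions~2.1 and 2.2]{W19} and used as a black box, so there is no ``paper's own proof'' to compare against. Your proposal therefore goes well beyond what the present paper does, supplying a self-contained argument where the authors simply cite the source.

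As for the argument itself, it is essentially sound and follows the expected line. A couple of points are worth tightening. In (iv) you call $e\mathfrak{A}$ a dense left ideal; this is correct but not automatic---it is a \emph{left} ideal only because $f\mathfrak{A}e=0$ forces $\mathfrak{A}e=e\mathfrak{A}e$, and density uses the faithfulness of $e\mathfrak{A}f$ as a right $f\mathfrak{A}f$-module to handle elements $b$ with $eb=0$. Also, your map $\phi(a_1+a_2)=a_1z+za_2$ is, since $z$ is central in $e\mathfrak{A}e$, simply left multiplication by $z$ restricted to $e\mathfrak{A}$; writing it this way makes the left-linearity check transparent. Finally, in (ii) and (vi) the step ``$m\,t=0$ for all $m\in e\mathfrak{A}f$ and $t\in fQ_\ell(\mathfrak{A})f$ imply $t=0$'' deserves one more line: for a dense left ideal $D$ with $Dt\subseteq\mathfrak{A}$ one has $(ed)t=(edf)t=0$ directly, while $(fd)t\in fQ_\ell(\mathfrak{A})f\cap\mathfrak{A}$ is annihilated on the left by every $m(fd)\in e\mathfrak{A}f$, hence vanishes by right faithfulness; thus $Dt=0$ and $t=0$. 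With these clarifications your proof is complete.
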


\begin{lem}\cite[Theorem 2.1]{W19}\label{lem2.2}
Let $\mathfrak{A}=\mathrm{{Tri}}(\mathcal{A}, \mathcal{M},\mathcal{B})$ be a triangular algebra. Then $\mathfrak{A^0}=\mathrm{{Tri}}(\mathcal{A}{\tau_r}^{-1}(\mathcal{Z}(\mathcal{B})),\mathcal{M}, \mathcal{B}{\tau_{\ell}}^{-1}(\mathcal{Z}(\mathcal{A})))$
is a triangular algebra such that $\mathfrak{A}$ is a subalgebra of $\mathfrak{A^0}$ having the same identity.
\end{lem}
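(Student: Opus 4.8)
The plan is to realise $\mathcal{A}$, $\mathcal{B}$, $\mathcal{M}$ as the corners $e\mathfrak{A}e$, $f\mathfrak{A}f$, $e\mathfrak{A}f$ (so $1_{\mathcal{A}}=e$ and $1_{\mathcal{B}}=f$) and to build $\mathfrak{A^0}$ inside the overrings $Q_{r}(\mathfrak{A})$ and $Q_{\ell}(\mathfrak{A})$. First I would check that $\mathcal{A}'=\mathcal{A}\tau_{r}^{-1}(\mathcal{Z}(\mathcal{B}))$ and $\mathcal{B}'=\mathcal{B}\tau_{\ell}^{-1}(\mathcal{Z}(\mathcal{A}))$ are unital $\mathcal{R}$-algebras. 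By Lemma~\ref{lem2.1}(iv),(v), $\mathcal{Z}(\mathcal{A})=\mathcal{Z}(e\mathfrak{A}e)\subseteq C_{\ell}(\mathfrak{A})e$ and $\mathcal{Z}(\mathcal{B})=\mathcal{Z}(f\mathfrak{A}f)\subseteq C_{r}(\mathfrak{A})f$; carrying these across by the isomorphisms of Lemma~\ref{lem2.1}(vi),(vii) yields unital commutative subalgebras $\tau_{r}^{-1}(\mathcal{Z}(\mathcal{B}))\subseteq eQ_{r}(\mathfrak{A})e$ and $\tau_{\ell}^{-1}(\mathcal{Z}(\mathcal{A}))\subseteq fQ_{\ell}(\mathfrak{A})f$, each lying in the centre of the corner in question because $C_{r}(\mathfrak{A})$ (resp. $C_{\ell}(\mathfrak{A})$) is central in $Q_{r}(\mathfrak{A})$ (resp. $Q_{\ell}(\mathfrak{A})$). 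Since such a central factor commutes with its $\mathcal{A}$- (resp. $\mathcal{B}$-) factor and both factors are closed under multiplication, $\mathcal{A}'$ is closed under multiplication, contains $\mathcal{A}$ (take the central factor to be $e$), and is unital with identity $e$; likewise $\mathcal{B}'\supseteq\mathcal{B}$ is unital with identity $f$.

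Next I would make $\mathcal{M}=e\mathfrak{A}f$ into an $(\mathcal{A}',\mathcal{B}')$-bimodule, with $\mathcal{A}'$ acting on the left by multiplication in $Q_{r}(\mathfrak{A})$ and $\mathcal{B}'$ acting on the right by multiplication in $Q_{\ell}(\mathfrak{A})$. Lemma~\ref{lem2.1}(vi),(vii) guarantees these actions stay in $\mathcal{M}$: if $t$ lies in the central factor of $\mathcal{A}'$ and $m\in\mathcal{M}$, then $tm=mz$ for the corresponding $z\in\mathcal{Z}(f\mathfrak{A}f)\subseteq\mathfrak{A}$, so $tm\in e\mathfrak{A}f$; dually $ms=wm\in e\mathfrak{A}f$ for the corresponding $w\in\mathcal{Z}(e\mathfrak{A}e)$; and multiplying by the $\mathcal{A}$- and $\mathcal{B}$-factors keeps us in $e\mathfrak{A}f$. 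The one-sided module axioms follow from associativity in the respective ring of quotients, and $e\cdot m=m=m\cdot f$. The delicate step — the one I expect to be the main obstacle — is that the two actions commute: writing $a'=\sum_{i}a_{i}t_{i}$, $b'=\sum_{j}s_{j}b_{j}$ and repeatedly using Lemma~\ref{lem2.1}(vi),(vii) to move the central factors $t_{i},s_{j}$ past $\mathcal{M}$, both $(a'\cdot m)\cdot b'$ and $a'\cdot(m\cdot b')$ reduce to the single expression $\sum_{i,j}a_{i}w_{j}\,m\,b_{j}z_{i}$, a product formed entirely inside $\mathfrak{A}$; the reconciliation uses $a_{i}w_{j}=w_{j}a_{i}$ in $e\mathfrak{A}e$ and $b_{j}z_{i}=z_{i}b_{j}$ in $f\mathfrak{A}f$, and keeping straight in which ring each intermediate product is taken is what makes this fiddly.

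Finally I would verify faithfulness. The key point is that $\mathfrak{A}f$ is a dense right ideal of $\mathfrak{A}$: given $a,b\in\mathfrak{A}$ with $a\neq0$, if $eae\neq0$ then, $\mathcal{M}$ being faithful as a left $\mathcal{A}$-module, there is $c\in e\mathfrak{A}f$ with $ac=(eae)c\neq0$ while $bc\in e\mathfrak{A}f\subseteq\mathfrak{A}f$; and if $eae=0$ then $a\in\mathfrak{A}f$ and $c=f$ works. Dually, faithfulness of $\mathcal{M}$ as a right $\mathcal{B}$-module makes $e\mathfrak{A}$ a dense left ideal of $\mathfrak{A}$. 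Now if $x\in\mathcal{A}'\subseteq eQ_{r}(\mathfrak{A})e$ annihilates $\mathcal{M}$ from the left, then, since also $x\cdot f\mathfrak{A}f=0$ (as $xf=0$), $x$ annihilates the dense right ideal $\mathfrak{A}f$; as no nonzero element of $Q_{r}(\mathfrak{A})$ can kill a dense right ideal of $\mathfrak{A}$ (see \cite{BMM96}), $x=0$, so $\mathcal{M}$ is faithful as a left $\mathcal{A}'$-module, and the mirror argument with $Q_{\ell}(\mathfrak{A})$ and $e\mathfrak{A}$ gives faithfulness as a right $\mathcal{B}'$-module. Hence $\mathfrak{A^0}=\mathrm{Tri}(\mathcal{A}',\mathcal{M},\mathcal{B}')$ is a triangular algebra. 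Since the algebra structures of $\mathcal{A}',\mathcal{B}'$ and the bimodule structure of $\mathcal{M}$ restrict to the original ones on $\mathcal{A}$, $\mathcal{B}$ and $\mathcal{M}$ (all products in sight already lie in $\mathfrak{A}$), the componentwise embedding $\mathfrak{A}\hookrightarrow\mathfrak{A^0}$ is an injective unital $\mathcal{R}$-algebra homomorphism sending $e\oplus f$ to $e\oplus f$; thus $\mathfrak{A}$ is a subalgebra of $\mathfrak{A^0}$ with the same identity.
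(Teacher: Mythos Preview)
The paper does not give a proof of this lemma at all; it is simply quoted from \cite[Theorem~2.1]{W19} and used as a black box. So there is no in-paper argument to compare your proposal against.

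That said, your outline is a correct and natural reconstruction of what such a proof must contain. The three substantive steps---(1) that $\mathcal{A}'=\mathcal{A}\,\tau_r^{-1}(\mathcal{Z}(\mathcal{B}))$ and $\mathcal{B}'=\mathcal{B}\,\tau_\ell(\mathcal{Z}(\mathcal{A}))$ are unital algebras because the $\tau$-factors lie in the centres of the relevant corners of $Q_r(\mathfrak{A})$ and $Q_\ell(\mathfrak{A})$; (2) that the extended one-sided actions on $\mathcal{M}$ land in $\mathcal{M}$ and are compatible, via the transfer identities of Lemma~\ref{lem2.1}(vi),(vii); and (3) faithfulness through the density of $\mathfrak{A}f$ (resp.\ $e\mathfrak{A}$) as a one-sided ideal together with the defining nondegeneracy of the maximal ring of quotients---are exactly the ingredients one needs, and you carry each of them through soundly. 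Your identification of the bimodule compatibility $(a'\cdot m)\cdot b'=a'\cdot(m\cdot b')$ as the delicate point is apt: the two actions live in different overrings $Q_r(\mathfrak{A})$ and $Q_\ell(\mathfrak{A})$, and your reduction of both sides to the common expression $\sum_{i,j}a_i w_j\,m\,b_j z_i$ inside $\mathfrak{A}$ is the right way to reconcile them.

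One cosmetic remark: the paper's notation $\mathcal{B}\,\tau_\ell^{-1}(\mathcal{Z}(\mathcal{A}))$ in the statement is at odds with the direction of $\tau_\ell$ in Lemma~\ref{lem2.1}(vi) (and indeed the paper itself writes $\mathcal{B}\,\tau_\ell(\mathcal{Z}(\mathcal{A}))$ later, in the proof of Theorem~\ref{thm1}); your argument silently uses the correct reading, with the central factor of $\mathcal{B}'$ lying in $C_\ell(\mathfrak{A})f$.
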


\section{Lie Higher Derivations of Arbitrary Triangular Algebras}

\par
Many authors studied Lie higher derivations on different rings and algebras.\linebreak Motivated by the work of Li and Shen \cite{LS12} and Moafian and Ebrahimi Vishki \cite{MEv16}, in this section we shall characterize Lie higher derivations on arbitrary triangular algebras. In order to obtain our main result, we begin with the following result.

\begin{lem}\label{lem3.1}
Let $\mathfrak{A}=\mathrm{{Tri}}(\mathcal{A},\mathcal{M},\mathcal{B})$ and $\mathfrak{A}_{1}=\mathrm{{Tri}}(\mathcal{A}_{1},\mathcal{M},\mathcal{B}_{1})$ be triangular algebras such that $\mathfrak{A}$ is a subalgebra of $\mathfrak{A}_{1}$ both having the same identity. Moreover, $\mathcal{A}\subseteq\mathcal{A}_{1}$ and $\mathcal{B}\subseteq\mathcal{B}_{1}$. A sequence $\mathcal{D}=\{\Delta_n\}_{n\in\mathbb{N}}$ of linear mappings  $\Delta_n: \mathfrak{A}\rightarrow \mathfrak{A}_{1}$ is a higher derivation if and only if for each  $n\in\mathbb{N},$ $\Delta_n$ can be written as

\begin{eqnarray}\label{eqn3.1}
% \nonumber % Remove numbering (before each equation)
  \Delta_n\left(\left[
\begin{array}{cc}
a & m \\
0 & b \\
\end{array}
\right]\right) &=& \left[
\begin{array}{cc}
f_n(a) & \sum\limits_{i+j=n,~i\neq n}(f_i(a)m_j-m_jg_i(b))+h_n(m) \\
0 & g_n(b) \\
\end{array}
\right],
\end{eqnarray}
where $\{m_j\}_{j\in\mathbb{N}} \subseteq \mathcal{M},$  $f_n:\mathcal{A}\rightarrow\mathcal{A}_{1},$ $h_n:\mathcal{M}\rightarrow\mathcal{M},$ $g_n:\mathcal{B}\rightarrow\mathcal{B}_{1}$ are linear mappings satisfying
\begin{enumerate}
  \item[$(i)$] $\{f_n\}_{n\in\mathbb{N}}$ is a higher derivation, $h_n(am)=\sum\limits_{i+j=n} f_i(a)h_j(m)$ for all $a\in\mathcal{A},$ $m\in\mathcal{M};$
  \item[$(ii)$] $\{g_n\}_{n\in\mathbb{N}}$ is a higher derivation, $h_n(mb)=\sum\limits_{i+j=n} h_i(m)g_j(b)$ for all $m\in\mathcal{M},$ $b\in\mathcal{B}.$
\end{enumerate}
\end{lem}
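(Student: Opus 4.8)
The plan is to verify both directions by computing $\Delta_n$ on the three "pieces" $ea e$, $emf$, $fbf$ of $\mathfrak{A}$ and exploiting the fact that $\mathfrak{A}_1$ is again triangular with the same idempotents $e,f$, so $e\mathfrak{A}_1 f$ is an $(\mathcal{A}_1,\mathcal{B}_1)$-bimodule and $f\mathfrak{A}_1 e = 0$. First I would do the easier ``if'' direction: assuming $\Delta_n$ has the stated matrix form with $(i)$ and $(ii)$, one substitutes two general elements $x=\left[\begin{smallmatrix} a & m\\ 0 & b\end{smallmatrix}\right]$, $y=\left[\begin{smallmatrix} a' & m'\\ 0 & b'\end{smallmatrix}\right]$ into $\Delta_n(xy)=\sum_{i+j=n}\Delta_i(x)\Delta_j(y)$ and checks the identity entrywise. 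The $(1,1)$-entry reduces to $\{f_n\}$ being a higher derivation, the $(2,2)$-entry to $\{g_n\}$ being a higher derivation, and the $(1,2)$-entry is a bookkeeping identity in the $m_j$'s together with the two module conditions $h_n(am')=\sum f_i(a)h_j(m')$ and $h_n(mb')=\sum h_i(m)g_j(b')$; the cross terms involving the fixed sequence $\{m_j\}$ cancel by reindexing the double sum $\sum_{i+j=n}$. This is a routine but somewhat lengthy induction/comparison and I would present only the key cancellation.

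For the ``only if'' direction, suppose $\mathcal{D}=\{\Delta_n\}$ is a higher derivation. Writing $\Delta_n(eae)$, $\Delta_n(emf)$, $\Delta_n(fbf)$ in block form, I would first use $\Delta_n(e\cdot e)=\sum_{i+j=n}\Delta_i(e)\Delta_j(e)$ and $\Delta_n(f)=\Delta_n(I-e)=-\Delta_n(e)$ to pin down $\Delta_n(e)$ inductively; a standard argument (Peirce decomposition plus $efe=0=fe f$) shows $\Delta_n(e)$ has zero $(1,1)$- and $(2,2)$-components, i.e.\ $\Delta_n(e)=em_n f$ for a uniquely determined $m_n\in\mathcal{M}$, with $m_0=0$. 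This is the source of the fixed sequence $\{m_j\}$. Then, using $eae=e\cdot(eae)\cdot e$ and the product rule, one shows $e\Delta_n(\mathfrak{A})f$-components behave correctly and that $f_n:=\pi_{\mathcal{A}_1}\circ\Delta_n|_{\mathcal{A}}$, $g_n:=\pi_{\mathcal{B}_1}\circ\Delta_n|_{\mathcal{B}}$, $h_n:= (\text{the }(1,2)\text{-part of }\Delta_n\text{ on }\mathcal{M})$ are well-defined linear maps landing in the right spaces, and that the higher-derivation identity for $\mathcal{D}$, restricted to products of the three types $eae\cdot ea'e$, $eae\cdot em'f$, $emf\cdot fb'f$, $fbf\cdot fb'f$, yields exactly $(i)$, $(ii)$ and the formula for the $(1,2)$-entry of $\Delta_n$ in terms of the $f_i,g_i,h_n,m_j$.

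The main obstacle I anticipate is handling the fixed sequence $\{m_j\}$ coming from $\Delta_n(e)$ consistently through the induction: one must check that the ``inner derivation-like'' contributions $\sum_{i+j=n,\,i\neq n}(f_i(a)m_j-m_jg_i(b))$ are forced, not merely allowed, and that after subtracting the corresponding inner higher derivation the remainder $h_n$ on $\mathcal{M}$ genuinely satisfies the clean bimodule conditions in $(i)$ and $(ii)$ with no leftover terms. Concretely, the care is in expanding $\Delta_n(e a e \cdot e m' f)$ and $\Delta_n(e m f\cdot f b' f)$ and separating the part that depends on $\{m_j\}$ from the part that defines $h_n$; the faithfulness of $\mathcal{M}$ as a left $\mathcal{A}$-module and as a right $\mathcal{B}$-module (equivalently, that $e\mathfrak{A}_1 f$ is faithful) is what guarantees uniqueness of the decomposition and that $f_n,g_n$ are unambiguously determined. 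Once that separation is made cleanly, assembling the block formula \eqref{eqn3.1} and conditions $(i)$–$(ii)$ is immediate, and the two directions are mutually inverse by construction.
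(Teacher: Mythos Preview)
Your plan is correct and matches the paper's approach: both argue by induction on $n$, dispose of the ``if'' direction as routine, and for the ``only if'' direction test the higher-derivation identity on the four product types $(eae)(ea'e)$, $(fbf)(fb'f)$, $(eae)(em'f)$, $(emf)(fb'f)$ to read off the component maps and the relations $(i)$, $(ii)$; your $m_n$ as the $(1,2)$-entry of $\Delta_n(e)$ is exactly the paper's $m_j:=h'_j(1_{\mathcal{A}})$. One minor correction: faithfulness of $\mathcal{M}$ is not needed in this lemma and the paper never invokes it here---the maps $f_n,g_n$ are simply the diagonal block projections of $\Delta_n$ (hence unambiguous by definition), and the vanishing of the unwanted components (e.g.\ $\Delta_n(\mathcal{A})$ having no $\mathcal{B}_1$-part) follows directly by setting $a'=1_{\mathcal{A}}$ or $b=1_{\mathcal{B}}$ in the product identities rather than from any faithfulness argument.
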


\begin{proof}
The proof of ``if" part is straightforward. We only need to prove ``only if" part. Assume that $\mathcal{D}=\{\Delta_n\}_{n\in\mathbb{N}}$ is a higher derivation. We shall use an induction method for $n.$ For $n=1,$ $\Delta_1:\mathfrak{A}\rightarrow \mathfrak{A}_{1}$ is a derivation and the result follows from \cite[Lemma 3.1]{W19}. Let us assume that the result holds for all integers $r<n.$  Suppose that $\Delta_n$ has the following form
  $$\Delta_n\left(\left[
\begin{array}{cc}
a & m \\
0 & b \\
\end{array}
\right]\right)=\left[
\begin{array}{cc}
f_n(a)+f^\prime_n(m)+f^{\prime\prime}_n(b) & h^\prime_n(a)+h_n(m)+h^{\prime\prime}_n(b) \\
0 & g^\prime_n(a)+g^{\prime\prime}_n(m)+g_n(b) \\
\end{array}
\right],$$
where $f_n,f^\prime_n,f^{\prime\prime}_n$ are linear mappings from $\mathcal{A},\mathcal{M},\mathcal{B}$ to $\mathcal{A}_1,$ respectively;
  $h^\prime_n,h_n,h^{\prime\prime}_n$ are linear mappings from $\mathcal{A},\mathcal{M},\mathcal{B}$ to $\mathcal{M},$ respectively;
  $g^\prime_n,g^{\prime\prime}_n,g_n$ are linear mappings from $\mathcal{A},\mathcal{M},\mathcal{B}$ to $\mathcal{B}_1,$ respectively.

Since $\mathcal{D}=\{\Delta_n\}_{n\in\mathbb{N}}$ is a higher derivation, we have
\begin{equation}\label{eqn3.2}
  \Delta_n(xy)=\sum\limits_{i+j=n}\Delta_i(x)\Delta_j(y)~~~ \mbox{for all}~~~ x,y\in\mathfrak{A}.
\end{equation}

\par Taking $x=\left[\begin{array}{cc}
a & 0 \\
0 & 0 \\
\end{array}\right]$ and $y=\left[\begin{array}{cc}
a^\prime & 0 \\
0 & 0 \\
\end{array}\right]$ in $(\ref{eqn3.2})$ yields
\begin{eqnarray*}
% \nonumber % Remove numbering (before each equation)
\lefteqn{  \left[
\begin{array}{cc}
f_n(aa^\prime) & h^\prime_n(aa^\prime) \\
0 & g^\prime_n(aa^\prime) \\
\end{array}
\right]}\\
&=& \Delta_n\left(\left[\begin{array}{cc}
aa^\prime & 0 \\
0 & 0 \\
\end{array}\right]\right)\\
&=& \Delta_n\left(\left[\begin{array}{cc}
a & 0 \\
0 & 0 \\
\end{array}\right]\left[\begin{array}{cc}
a^\prime & 0 \\
0 & 0 \\
\end{array}\right]\right)\\
&=&\left[
\begin{array}{cc}
f_n(a) & h^\prime_n(a) \\
0 & g^\prime_n(a) \\
\end{array}
\right]\left[\begin{array}{cc}
a^\prime & 0 \\
0 & 0 \\
\end{array}\right]+\left[\begin{array}{cc}
a & 0 \\
0 & 0 \\
\end{array}\right]\left[
\begin{array}{cc}
f_n(a^\prime) & h^\prime_n(a^\prime) \\
0 & g^\prime_n(a^\prime) \\
\end{array}
\right]\\
&&+\sum\limits_{i+j=n~i,j\neq n}\left[
\begin{array}{cc}
f_i(a) & \sum\limits_{p+q=i,~p\neq i}f_p(a)m_q \\
0 & 0 \\
\end{array}
\right]\left[
\begin{array}{cc}
f_j(a^\prime) & \sum\limits_{p+q=j,~p\neq j}f_p(a^\prime)m_q \\
0 & 0 \\
\end{array}
\right]\\
&=&\left[
\begin{array}{cc}
\sum\limits_{i+j=n}f_i(a)f_j(a^\prime) & ah^\prime_n(a^\prime)+\sum\limits_{i+j=n~i,j\neq n}f_i(a)h^\prime_j(a^\prime) \\
0 & 0 \\
\end{array}
\right]\\
&=&\left[
\begin{array}{cc}
\sum\limits_{i+j=n}f_i(a)f_j(a^\prime) & \sum\limits_{i+j=n~i\neq n}f_i(a)h^\prime_j(a^\prime) \\
0 & 0 \\
\end{array}
\right].
\end{eqnarray*}
Thus, $f_n(aa^\prime)=\sum\limits_{i+j=n}f_i(a)f_j(a^\prime),$ $h^\prime_n(aa^\prime)=\sum\limits_{i+j=n~i\neq n}f_i(a)h^\prime_j(a^\prime)$ and $g^\prime_n(aa^\prime)=0$ for all $a,a^\prime\in\mathcal{A}.$ Putting $a^\prime=1,$ we get $h^\prime_n(a)=\sum\limits_{i+j=n~i\neq n}f_i(a)m_j,$ where $m_j=h^\prime_j(1)$ and $g^\prime_n(a)=0$ for all $a\in\mathcal{A}.$ Repeating the same computation and choosing $x=\left[\begin{array}{cc}
0 & 0 \\
0 & b \\
\end{array}\right]$ and $y=\left[\begin{array}{cc}
0 & 0 \\
0 & b^\prime \\
\end{array}\right]$ in $(\ref{eqn3.2}),$ we get $f^{\prime\prime}_n(bb^\prime)=0,$ $h^{\prime\prime}_n(bb^\prime)=-\sum\limits_{i+j=n~i\neq n}h^\prime_i(b)g_j(b^\prime)$ and $g_n(bb^\prime)=\sum\limits_{i+j=n}g_i(b)g_j(b^\prime)$ for all $b,b^\prime\in\mathcal{B}.$ Putting $b=1,$ we get $f^{\prime\prime}_n(b^\prime)=0$ and $h^{\prime\prime}_n(b^\prime)=-\sum\limits_{i+j=n~i\neq n}m_jg_i(b)$ for all $b^\prime\in\mathcal{B}.$

\par If we choose $x=\left[\begin{array}{cc}
a & 0 \\
0 & 0 \\
\end{array}\right]$ and $y=\left[\begin{array}{cc}
0 & m \\
0 & 0 \\
\end{array}\right]$ in $(\ref{eqn3.2}),$ then we arrive at
\begin{eqnarray*}
% \nonumber % Remove numbering (before each equation)
 \left[
\begin{array}{cc}
f^\prime_n(am) & h_n(am) \\
0 & g^{\prime\prime}_n(am) \\
\end{array}
\right] &=& \Delta_n\left(\left[\begin{array}{cc}
0 & am \\
0 & 0 \\
\end{array}\right]\right)\\
&=& \Delta_n\left(\left[\begin{array}{cc}
a & 0 \\
0 & 0 \\
\end{array}\right]\left[\begin{array}{cc}
0 & m \\
0 & 0 \\
\end{array}\right]\right)\\
\end{eqnarray*}

\begin{eqnarray*}
&=&\left[
\begin{array}{cc}
f_n(a) & h^\prime_n(a) \\
0 & 0 \\
\end{array}
\right]\left[\begin{array}{cc}
0 & m \\
0 & 0 \\
\end{array}\right]+\left[\begin{array}{cc}
a & 0 \\
0 & 0 \\
\end{array}\right]\left[
\begin{array}{cc}
f^\prime_n(m) & h_n(m) \\
0 & g^{\prime\prime}_n(m) \\
\end{array}
\right]\\
&&+\sum\limits_{i+j=n~i,j\neq n}\left[
\begin{array}{cc}
f_i(a) & \sum\limits_{p+q=i~p\neq i}f_p(a)m_q \\
0 & 0 \\
\end{array}
\right]\left[
\begin{array}{cc}
0 & h_j(m) \\
0 & 0 \\
\end{array}
\right]\\
&=&\left[
\begin{array}{cc}
0 & f_n(a)m \\
0 & 0 \\
\end{array}
\right]+\left[
\begin{array}{cc}
af^\prime_n(m) & h_n(m) \\
0 & 0 \\
\end{array}
\right]+\sum\limits_{i+j=n~i,j\neq n}\left[
\begin{array}{cc}
0 & f_i(a)h_j(m) \\
0 & 0 \\
\end{array}
\right]\\
&=& \left[
\begin{array}{cc}
af^\prime_n(m) & \sum\limits_{i+j=n}f_i(a)h_j(m) \\
0 & 0 \\
\end{array}
\right]
\end{eqnarray*}
It follows that $h_n(am)=\sum\limits_{i+j=n}f_i(a)h_j(m)$ and $g^{\prime\prime}_n(am)=0$ for all $a\in\mathcal{A},$ $m\in\mathcal{M}.$ Putting $a=1,$ we get  $g^{\prime\prime}_n(m)=0$ for all $m\in\mathcal{M}.$ Similarly, consider $x=\left[\begin{array}{cc}
0 & m \\
0 & 0 \\
\end{array}\right]$ and $y=\left[\begin{array}{cc}
0 & 0 \\
0 & b \\
\end{array}\right]$ in $(\ref{eqn3.2})$ to obtain $f^\prime_n(mb)=0$ and $h_n(mb)=\sum\limits_{i+j=n}h_i(m)g_j(b)$ for all $m\in\mathcal{M},$ $b\in\mathcal{B}.$ Taking $b=1,$ we get $f^\prime_n(m)=0$ for all $m\in\mathcal{M}.$
\end{proof}

The main result of this article states as follows:

\begin{thm}\label{thm1}
Let $\mathfrak{A}=\mathrm{{Tri}}(\mathcal{A},\mathcal{M},\mathcal{B})$ be a triangular algebra and $\mathcal{L}=\{L_n\}_{n\in\mathbb{N}}$ be a Lie higher derivation on $\mathfrak{A}$. Then, there exists a triangular algebra $\mathfrak{A}^{0}$ such that $\mathfrak{A}$ is a subalgebra of $\mathfrak{A}^{0}$ having the same identity and $L_n$ can be written as $L_n= \Delta_n +\chi_n$ for each $n\in\mathbb{N},$  where $\{\Delta_n\}_{n\in\mathbb{N}},$ $\Delta_n:\mathfrak{A}\rightarrow\mathfrak{A}^{0}$ is a higher derivation and $\{\chi_n\}_{n\in\mathbb{N}}$ is a sequence of linear mappings $\chi_n:\mathfrak{A}\rightarrow\mathcal{Z}(\mathfrak{A}^{0})$ such that $\chi_n([x,y])=0$ for all $x,y\in \mathfrak{A}$ and for each $n\in\mathbb{N}.$
\end{thm}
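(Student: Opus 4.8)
The plan is to lift $\mathcal{L}=\{L_n\}$ from $\mathfrak{A}$ to a Lie higher derivation on the larger algebra $\mathfrak{A}^0=\mathrm{Tri}(\mathcal{A}{\tau_r}^{-1}(\mathcal{Z}(\mathcal{B})),\mathcal{M},\mathcal{B}{\tau_\ell}^{-1}(\mathcal{Z}(\mathcal{A})))$ supplied by Lemma \ref{lem2.2}, and then mimic in the higher-order setting the decomposition strategy that Wang \cite{W19} used for single Lie derivations (the $n=1$ case). I would argue by strong induction on $n$. For $n=0$ there is nothing to prove since $L_0=\mathrm{id}$. Assume that for all $r<n$ we have produced $\Delta_r:\mathfrak{A}\to\mathfrak{A}^0$ and $\chi_r:\mathfrak{A}\to\mathcal{Z}(\mathfrak{A}^0)$ with $L_r=\Delta_r+\chi_r$, $\chi_r([x,y])=0$, and $\{\Delta_r\}_{r\le n-1}$ behaving as the initial segment of a higher derivation; I want to construct $\Delta_n$ and $\chi_n$.

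First I would extract the ``diagonal'' behaviour of $L_n$. Writing $x=exe+exf+fxf$ and plugging commutators of the idempotents $e,f$ and diagonal elements into the Lie higher derivation identity $L_n([x,y])=\sum_{i+j=n}[L_i(x),L_j(y)]$, one shows (as in the $n=1$ analysis, but now absorbing the already-known lower-order terms $L_i,\ i<n$) that the $e\mathfrak{A}e$- and $f\mathfrak{A}f$-components of $L_n$ restricted to $e\mathfrak{A}e$ and $f\mathfrak{A}f$ behave, modulo center, like higher derivations; this is exactly where the passage to $\mathfrak{A}^0$ is needed, because the ``Lie-type'' maps on the corners only take values in $\mathcal{Z}(\mathcal{A})$ resp. $\mathcal{Z}(\mathcal{B})$ after enlarging, via the isomorphisms $\tau_\ell,\tau_r$ and Lemma \ref{lem2.1}. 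Concretely I would define candidate component maps $f_n$ on $\mathcal{A}$, $g_n$ on $\mathcal{B}$, $h_n$ on $\mathcal{M}$ by reading off the appropriate blocks of $L_n$ after subtracting a central correction, set
\[
\Delta_n\left(\left[\begin{array}{cc} a & m \\ 0 & b \end{array}\right]\right)=\left[\begin{array}{cc} f_n(a) & \sum_{i+j=n,\ i\neq n}(f_i(a)m_j-m_jg_i(b))+h_n(m) \\ 0 & g_n(b)\end{array}\right]
\]
with $m_j$ determined by the lower-order data, and put $\chi_n=L_n-\Delta_n$. The bulk of the work is then to verify, using the inductive hypotheses and the Lie higher derivation identity, that $\{f_r\}_{r\le n}$ and $\{g_r\}_{r\le n}$ satisfy the higher-derivation recursions, that $h_n(am)=\sum_{i+j=n}f_i(a)h_j(m)$ and $h_n(mb)=\sum_{i+j=n}h_i(m)g_j(b)$, so that Lemma \ref{lem3.1} applies and guarantees $\{\Delta_r\}_{r\le n}$ is a genuine (initial segment of a) higher derivation into $\mathfrak{A}^0$.

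It then remains to check that $\chi_n=L_n-\Delta_n$ actually lands in $\mathcal{Z}(\mathfrak{A}^0)$ and annihilates commutators. For the first point I would compute $\chi_n(x)=L_n(x)-\Delta_n(x)$ blockwise and show the off-diagonal block vanishes and the two diagonal blocks $c_e,c_f$ satisfy $c_e\,exf=exf\,c_f$ for all $x$, which by Lemma \ref{lem2.1}(ii)--(iii) and the characterisation of $\mathcal{Z}(\mathfrak{A}^0)$ forces $\chi_n(x)\in\mathcal{Z}(\mathfrak{A}^0)$; the relevant identity comes out of the Lie higher derivation identity applied to $[ex f\ (\text{or}\ exe),\ y]$-type pairs together with the inductive control on $L_i$, $i<n$. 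For the second point, since both $\{L_r\}$ and $\{\Delta_r\}$ satisfy Lie higher derivation identities (the latter because every higher derivation is a Lie higher derivation), subtracting gives $\sum_{i+j=n}[\chi_i(x),\,(\cdot)_j(y)]+\cdots$; using $\chi_i$ central for $i<n$ and $\chi_i([x,y])=0$ for $i<n$, all cross terms collapse and one is left with $\chi_n([x,y])=0$.

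The main obstacle I expect is purely bookkeeping: correctly isolating the genuinely new order-$n$ contribution from the convolution sums $\sum_{i+j=n}$ so that the lower-order terms, which are already known to be ``higher derivation plus central'', combine to cancel, and simultaneously making sure the central corrections $\chi_i$ ($i<n$) do not obstruct the component identities needed for Lemma \ref{lem3.1}. In other words, the delicate part is not any single computation but the choice of the $m_j$'s and the central correction at stage $n$ so that all of (i)--(ii) of Lemma \ref{lem3.1} hold simultaneously with $\chi_n$ being central and commutator-annihilating; once the definitions are set up compatibly with the inductive hypothesis, each verification reduces to specializing the Lie higher derivation identity at elements of the three corners $e\mathfrak{A}e$, $e\mathfrak{A}f$, $f\mathfrak{A}f$, exactly as in the base case treated by Wang \cite{W19}.
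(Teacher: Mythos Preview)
Your outline is essentially the paper's proof: the same enlarged algebra $\mathfrak{A}^0$, strong induction on $n$ with Wang's result \cite[Theorem~3.1]{W19} as the base case, Lemma~\ref{lem3.1} as the recognition criterion for $\{\Delta_r\}$, and the same final verification that $\chi_n$ is central and annihilates commutators. Two points are worth flagging.

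First, the paper does not rederive the block structure of $L_n$ by specializing the Lie identity at idempotents as you propose; it simply quotes \cite[Theorem~2.1]{MEv16}, which hands over linear maps $f_n:\mathcal{A}\to\mathcal{A}$, $g_n:\mathcal{B}\to\mathcal{B}$, $h_n:\mathcal{M}\to\mathcal{M}$, $p_n:\mathcal{B}\to\mathcal{Z}(\mathcal{A})$, $q_n:\mathcal{A}\to\mathcal{Z}(\mathcal{B})$ together with the relevant identities in one stroke. The corrected corner maps are then $d_n=f_n-\tau_r^{-1}\circ q_n$ and $d'_n=g_n-\tau_\ell\circ p_n$, and $\chi_n$ is defined from $p_n,q_n,\tau_\ell,\tau_r$ explicitly.

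Second, and this is the step your sketch glosses over: you cannot get the higher-derivation recursion $d_n(aa')=\sum_{p+q=n} d_p(a)d_q(a')$ directly from the Lie higher derivation identity on $\mathcal{A}$ (which only controls commutators). The paper instead deduces it from the \emph{module} identity $h_n(am)=\sum d_i(a)h_j(m)$ by computing $h_n((aa')m)=h_n(a(a'm))$ two ways and then invoking faithfulness of $\mathcal{M}$ as a left $\mathcal{A}\tau_r^{-1}(\mathcal{Z}(\mathcal{B}))$-module (and symmetrically on the right for $d'_n$). This faithfulness argument is the genuine engine that upgrades the Lie-type information on the corners to a bona fide higher derivation, and should be made explicit in your inductive step.
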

\begin{proof}
Consider $\mathfrak{A}^{0}=\mathrm{{Tri}}(\mathcal{A}{\tau_r}^{-1}(\mathcal{Z}(\mathcal{B})),\mathcal{M},
\mathcal{B}{\tau_{\ell}}^{-1}(\mathcal{Z}(\mathcal{A}))).$ Then, in view of Lemma \ref{lem2.2}, $\mathfrak{A}^{0}$ is a triangular algebra such that $\mathfrak{A}$ is a subalgebra of $\mathfrak{A}^{0}$ having the same identity.

\par In order to obtain this theorem, we proceed by induction on $n.$ By the definition of Lie higher derivation, $L_1:\mathfrak{A}\rightarrow \mathfrak{A}$ is a Lie derivation and hence it follows from \cite[Theorem 3.1]{W19} that $L_1=\Delta_1+\chi_1,$ where $\Delta_1:\mathfrak{A}\rightarrow \mathfrak{A}^{0}$ is a derivation and $\chi_1:\mathfrak{A}\rightarrow \mathcal{Z}(\mathfrak{A}^{0})$ is a linear mapping such that $\chi_1([x,y])=0$ for all $x,y\in\mathfrak{A}.$

\par Let us assume that $L_r=\Delta_r+\chi_r$ for all integers $r<n,$ where $\Delta_r:\mathfrak{A}\rightarrow\mathfrak{A}^{0},$ $\chi_r:\mathfrak{A}\rightarrow \mathcal{Z}(\mathfrak{A}^{0})$ are linear mappings such that $\Delta_r(xy)=\sum\limits_{i+j=r} \Delta_i(x)\Delta_j(y)$ and $\chi_r([x,y])=0$ for all $x,y\in\mathfrak{A}.$ This implies that $\{\Delta_i\}_{i=0}^{r}$, $\Delta_i:\mathfrak{A}\rightarrow\mathfrak{A}^{0}$ is a higher derivation and hence, in view of Lemma \ref{lem3.1}, $\Delta_r$ can be written as

\begin{eqnarray*}
% \nonumber % Remove numbering (before each equation)
  \Delta_r\left(\left[
\begin{array}{cc}
a & m \\
0 & b \\
\end{array}
\right]\right) &=& \left[
\begin{array}{cc}
d_r(a) & \sum\limits_{i+j=r,~i\neq r}(d_i(a)m_j-m_jd^\prime_i(b))+h_r(m) \\
0 & d^\prime_r(b) \\
\end{array}
\right],
\end{eqnarray*}
where $\{m_i\}_{i=0}^{r} \subseteq \mathcal{M},$  $d_r:\mathcal{A}\rightarrow\mathcal{A}_{1},$ $h_n:\mathcal{M}\rightarrow\mathcal{M},$ $d^\prime_r:\mathcal{B}\rightarrow\mathcal{B}_{1}$ are linear mappings satisfying
\begin{enumerate}
  \item[$(i)$] $\{d_i\}_{i=0}^{r}$ is a higher derivation, $h_r(am)=\sum\limits_{i+j=r} d_i(a)h_j(m)$ for all $a\in\mathcal{A},$ $m\in\mathcal{M};$
  \item[$(ii)$] $\{d^\prime_i\}_{i=0}^{r}$ is a higher derivation, $h_r(mb)=\sum\limits_{i+j=r} h_i(m)d^\prime_j(b)$ for all $m\in\mathcal{M},$ $b\in\mathcal{B}.$
\end{enumerate}

\noindent We now show that $L_n$ has the similar decomposition, that is, $L_n=\Delta_n+\chi_n,$  where $\Delta_n:\mathfrak{A}\rightarrow\mathfrak{A}^{0},$ $\chi_n:\mathfrak{A}\rightarrow \mathcal{Z}(\mathfrak{A}^{0})$ are linear mappings such that $\Delta_n(xy)=\sum\limits_{i+j=n} \Delta_i(x)\Delta_j(y)$ and $\chi_n([x,y])=0$ for all $x,y\in\mathfrak{A}.$

\par It follows from \cite[Theorem 2.1]{MEv16} that $L_n$ has the following form
\begin{eqnarray*}
% \nonumber % Remove numbering (before each equation)
{\SMALL{  L_n\left(\left[
\begin{array}{cc}
a & m \\
0 & b \\
\end{array}
\right]\right)}}&=& {\SMALL{ \left[
\begin{array}{cc}
f_n(a)+p_n(b) & \sum\limits_{i+j=n,i\neq n}((f_i(a)+p_i(b))m_j-m_j(q_i(a)+g_i(b)))+h_n(m) \\
0 & q_n(a)+g_n(b) \\
\end{array}
\right]}}
\end{eqnarray*}
where $\{m_j\}_{j\in\mathbb{N}} \subseteq \mathcal{M},$  $f_n:\mathcal{A}\rightarrow\mathcal{A},$ $p_n:\mathcal{B}\rightarrow\mathcal{Z}(\mathcal{A}),$ $h_n:\mathcal{M}\rightarrow\mathcal{M},$ $q_n:\mathcal{A}\rightarrow\mathcal{Z}(\mathcal{B}),$ $g_n:\mathcal{B}\rightarrow\mathcal{B}$ are linear mappings satisfying
\begin{enumerate}
  \item[$(i)$] $\{f_n\}_{n\in\mathbb{N}}$ is a Lie higher derivation, $h_n(am)=\sum\limits_{i+j=n} (f_i(a)h_j(m)-h_j(m)q_i(a))$ for all $a\in\mathcal{A},$ $m\in\mathcal{M};$
  \item[$(ii)$] $\{g_n\}_{n\in\mathbb{N}}$ is a Lie higher derivation, $h_n(mb)=\sum\limits_{i+j=n} (h_i(m)g_j(b)-p_j(b)h_i(m))$ for all $m\in\mathcal{M},$ $b\in\mathcal{B};$
  \item[$(iii)$]  $p_n([b,b^\prime])=0$ and $q_n([a,a^\prime])=0$ for all $a,a^\prime\in\mathcal{A},$  $b,b^\prime\in\mathcal{B}.$
\end{enumerate}

\noindent
Employing Lemma \ref{lem2.1}$(v)$ and Lemma \ref{lem2.1}$(vii),$ we have
\begin{eqnarray}\label{eqn3.3}
h_n(am)&=&\sum\limits_{i+j=n} (f_i(a)-{\tau_r}^{-1}(q_i(a)))h_j(m)~~~\mbox{for all}~~~ a \in\mathcal{A},~~ m\in\mathcal{M}.
\end{eqnarray}
Similarly, using Lemma \ref{lem3.1}$(iv)$ and Lemma \ref{lem3.1}$(vi),$ we get
\begin{eqnarray}\label{eqn3.4}
h_n(mb)&=&\sum\limits_{i+j=n} h_i(m)(g_j(b)-\tau_{\ell}(p_j(b)))~~~\mbox{for all}~~~ b \in\mathcal{B},~~ m\in\mathcal{M}.
\end{eqnarray}
\noindent
Let us define two mappings $\Delta_n, \chi_n : \mathfrak{A}\rightarrow \mathfrak{A}^{0}$ by
\begin{eqnarray*}
% \nonumber % Remove numbering (before each equation)
 {\SMALL{ \Delta_n\left(\left[
\begin{array}{cc}
a & m \\
0 & b \\
\end{array}
\right]\right)}} &=& {\tiny{\left[
\begin{array}{cc}
f_n(a)-{\tau_r}^{-1}(q_n(a)) & \sum\limits_{i+j=n,i\neq n}((f_n(a)-{\tau_r}^{-1}(q_n(a))m_j-m_j(g_n(b)-\tau_{\ell}(p_n(b)))+h_n(m) \\
0 & g_n(b)-\tau_{\ell}(p_n(b)) \\
\end{array}
\right]}}
\end{eqnarray*}
 and
\begin{eqnarray*}
% \nonumber % Remove numbering (before each equation)
  \chi_n\left(\left[
\begin{array}{cc}
a & m \\
0 & b \\
\end{array}
\right]\right) &=& \left[
\begin{array}{cc}
{\tau_r}^{-1}(q_n(a))+p_n(b) & 0 \\
0 & q_n(a)+\tau_{\ell}(p_n(b)) \\
\end{array}
\right].
\end{eqnarray*}
It is easy to see that $\Delta_n$ and $\chi_n$ are linear mappings and $L_n=\Delta_n+\chi_n.$ Set\linebreak $d_n=f_n-{\tau_r}^{-1}\circ q_n$ and ${d^\prime}_n=g_n-\tau_{\ell} \circ p_n.$ Then, one can easily see that\linebreak $d_n:\mathcal{A}\rightarrow\mathcal{A}{\tau_r}^{-1}(\mathcal{Z}(\mathcal{B}))$ and ${d^\prime}_n:\mathcal{B}\rightarrow\mathcal{B}{\tau_{\ell}}(\mathcal{Z}(\mathcal{A}))$ are linear mappings. Moreover, $\Delta_n,$ equations $(\ref{eqn3.3})$ and $(\ref{eqn3.4})$ become
\begin{eqnarray}\label{eqn3.5}
% \nonumber % Remove numbering (before each equation)
  \Delta_n\left(\left[
\begin{array}{cc}
a & m \\
0 & b \\
\end{array}
\right]\right) &=& \left[
\begin{array}{cc}
d_n(a) & \sum\limits_{i+j=n,~i\neq n}(d_i(a)m_j-m_jd^\prime_i(b))+h_n(m) \\
0 & d^\prime_n(b) \\
\end{array}
\right],
\end{eqnarray}

\begin{eqnarray}\label{eqn3.6}
h_n(am)&=&\sum\limits_{i+j=n} d_i(a)h_j(m)~~~\mbox{for all}~~~ a \in\mathcal{A},~~ m\in\mathcal{M}
\end{eqnarray}
and
\begin{eqnarray}\label{eqn3.7}
h_n(mb)&=&\sum\limits_{i+j=n} h_i(m)d^\prime_j(b)~~~\mbox{for all}~~~ b \in\mathcal{B},~~ m\in\mathcal{M},
\end{eqnarray}
respectively. We assert that $d_n(aa^\prime)=\sum\limits_{i+j=n}d_i(a)d_j(a^\prime)$ and $d^\prime_n(bb^\prime)=\sum\limits_{i+j=n} d^\prime_i(b)d^\prime_j(b^\prime)$ for all $a, a^{\prime} \in \mathcal{A},$ $b, b^{\prime} \in \mathcal{B}.$ For any $a, a^{\prime} \in \mathcal{A},$ $m \in \mathcal{M},$ it follows from equation (\ref{eqn3.6}) and the fact that $\{d_i\}_{i=0}^{r}~~(r<n)$ is a higher derivation, we obtain
\begin{eqnarray*}
% \nonumber % Remove numbering (before each equation)
  h_n(aa^{\prime}m) &=& h_n((aa^{\prime})m)\\
    &=& \sum\limits_{i+j=n} d_i(aa^{\prime})h_j(m) \\
    &=& \sum\limits_{i+j=n,1\leq j} d_i(aa^{\prime})h_j(m)+d_n(aa^{\prime})m\\
    &=& \sum\limits_{i+j=n,1\leq j} (\sum\limits_{p+q=i}d_p(a)d_q(a^{\prime}))h_j(m)+d_n(aa^{\prime})m\\
    &=& \sum\limits_{p+q+j=n,1\leq j}d_p(a)d_q(a^{\prime})h_j(m)+d_n(aa^{\prime})m.
\end{eqnarray*}
On the other hand, we get
\begin{eqnarray*}
h_n(aa^{\prime}m)&=& h_n(a(a^{\prime}m))\\
&=&\sum\limits_{i+j=n} d_i(a)h_j(a^{\prime}m)\\
&=& \sum\limits_{i+j=n} d_i(a)(\sum\limits_{p+q=j} d_p(a^{\prime})h_q(m))\\
&=& \sum\limits_{i+p+q=n}d_i(a)d_p(a^{\prime})h_q(m)\\
&=& \sum\limits_{p+q+j=n}d_p(a)d_q(a^{\prime})h_j(m)\\
&=&\sum\limits_{p+q+j=n, 1\leq j}d_p(a)d_q(a^{\prime})h_j(m)+\sum\limits_{p+q=n}d_p(a)d_q(a^{\prime})m
\end{eqnarray*}
for all $a, a^{\prime} \in \mathcal{A}$ and $m \in \mathcal{M}$. Combining the above two equations, we arrive at
$$(d_n(aa^{\prime})-\sum\limits_{p+q=n}d_p(a)d_q(a^{\prime}))m=0$$
for all $a, a^{\prime} \in \mathcal{A}$ and $m \in \mathcal{M}$. Since $\mathcal{M}$ is a faithful left $\mathcal{A}{\tau_r}^{-1}(Z(\mathcal{B}))$-module, we get
\begin{eqnarray}\label{eqn3.8}
% \nonumber % Remove numbering (before each equation)
  d_n(aa^{\prime})&=&\sum\limits_{p+q=n}d_p(a)d_q(a^{\prime})~~\mbox{for all}~~ a, a^{\prime} \in \mathcal{A}.
\end{eqnarray}

\noindent
In a similar manner, using equation (\ref{eqn3.6}) and the fact that $\{d^\prime_i\}_{i=0}^{r}~~(r<n)$ is a higher derivation, we can prove that
\begin{eqnarray}\label{eqn3.9}
% \nonumber % Remove numbering (before each equation)
  d^\prime_n(bb^{\prime})&=&\sum\limits_{p+q=n}d^\prime_p(b)d^\prime_q(b^{\prime})~~\mbox{for all}~~ b, b^{\prime} \in \mathcal{B}.
\end{eqnarray}

\noindent
It follows from Lemma \ref{lem3.1} and equations $(\ref{eqn3.5})$-$(\ref{eqn3.9})$ that $\{\Delta_i\}_{i=0}^{n}$ is a higher derivation, that is, $\Delta_n(xy)=\sum\limits_{i+j=n}\Delta_i(x)\Delta_j(y)$ for all $x,y \in\mathfrak{A}.$

\par It only remains to prove that $\chi_n(\mathfrak{A})\subseteq \mathcal{Z}(\mathfrak{A^0})$ and $\chi_n ([x,y])=0$ for all $x,y\in \mathfrak{A}.$
For any $m \in \mathcal{M},$ we have
\begin{eqnarray*}
({\tau_r}^{-1}(q_n(a))+p_n(b))m&=&{\tau_r}^{-1}(q_n(a))m+p_n(b)m\\
&=&mq_n(a)+m\tau_{\ell}(p_n(b))\\
&=&m(q_n(a)+\tau_{\ell}(p_n(b))).
\end{eqnarray*}
Hence, by Lemma \ref{lem3.1}$(i)$, we get $\chi_n(\mathfrak{A})\subseteq \mathcal{Z}(\mathfrak{A^0}).$ Since $L_n=\Delta_n+ \chi_n$ is a Lie higher derivation and $\chi_n(\mathfrak{A})\subseteq \mathcal{Z}(\mathfrak{A^0}),$  we have
\begin{eqnarray*}
\chi_n([x,y])&=&L_n([x,y])-\Delta_n([x,y])\\
&=&\sum\limits_{i+j=n}[L_i(x),L_j(y)]-\Delta_n(xy)+\Delta_n(yx)\\
&=&\sum\limits_{i+j=n}[\Delta_i(x)+\chi_i(x),\Delta_j(y)+\chi_j(y)]-\sum\limits_{i+j=n}\Delta_i(x)\Delta_j(y)\\
&&+\sum\limits_{i+j=n}\Delta_i(y)\Delta_j(x)\\
&=&\sum\limits_{i+j=n}[\Delta_i(x),\Delta_j(y)]-\sum\limits_{i+j=n}\Delta_i(x)\Delta_j(y)+\sum\limits_{i+j=n}\Delta_i(y)\Delta_j(x)\\
&=&0
\end{eqnarray*}
for all $x,y\in \mathfrak{A}.$
\end{proof}

As a consequence of Theorem \ref{thm1}, we obtain the following result:

\begin{cor}\cite[Theorem 3.1]{W19}
Let $\mathfrak{A}=\mathrm{{Tri}}(\mathcal{A},\mathcal{M},\mathcal{B})$ be a triangular algebra and $\mathcal{L}$ be a Lie derivation on $\mathfrak{A}$. Then, there exists a triangular algebra $\mathfrak{A}^{0}$ such that $\mathfrak{A}$ is a subalgebra of $\mathfrak{A}^{0}$ having the same identity and $\mathcal{L}$ can be written as $\mathcal{L}= \Delta +\chi,$  where  $\Delta:\mathfrak{A}\rightarrow\mathfrak{A}^{0}$ is a derivation and  $\chi:\mathfrak{A}\rightarrow\mathcal{Z}(\mathfrak{A}^{0})$ is a linear mapping such that $\chi([x,y])=0$ for all $x,y\in \mathfrak{A}.$
\end{cor}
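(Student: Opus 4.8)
The plan is to recognize this corollary as the degree-one instance of Theorem \ref{thm1}. First I would attach to the single Lie derivation $\mathcal{L}$ the truncated data $L_0=\mathrm{id}_{\mathfrak{A}}$ and $L_1=\mathcal{L}$, and record that these two maps satisfy the Lie higher derivation identity in the only degrees where they interact: in degree $0$ one has $L_0([x,y])=[x,y]=[L_0(x),L_0(y)]$, and in degree $1$ the identity $L_1([x,y])=[L_0(x),L_1(y)]+[L_1(x),L_0(y)]=[x,\mathcal{L}(y)]+[\mathcal{L}(x),y]$ is precisely the statement that $\mathcal{L}$ is a Lie derivation. Thus $(L_0,L_1)$ is the degree-$\le 1$ part of a Lie higher derivation, which is exactly the input consumed by the base step of Theorem \ref{thm1}.

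Next I would invoke Theorem \ref{thm1}, which produces the triangular algebra $\mathfrak{A}^{0}=\mathrm{Tri}(\mathcal{A}{\tau_r}^{-1}(\mathcal{Z}(\mathcal{B})),\mathcal{M},\mathcal{B}{\tau_{\ell}}^{-1}(\mathcal{Z}(\mathcal{A})))$ containing $\mathfrak{A}$ as a subalgebra with the same identity, together with the decomposition $L_1=\Delta_1+\chi_1$. Here $\chi_1:\mathfrak{A}\rightarrow\mathcal{Z}(\mathfrak{A}^{0})$ already satisfies $\chi_1([x,y])=0$, while $\Delta_1:\mathfrak{A}\rightarrow\mathfrak{A}^{0}$ is the degree-one term of the higher derivation $\{\Delta_i\}$ built in the theorem. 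Since $\Delta_0=\mathrm{id}_{\mathfrak{A}}$, the higher derivation law in degree one reads $\Delta_1(xy)=\Delta_0(x)\Delta_1(y)+\Delta_1(x)\Delta_0(y)=x\Delta_1(y)+\Delta_1(x)y$, so $\Delta_1$ is an ordinary derivation from $\mathfrak{A}$ into $\mathfrak{A}^{0}$. Reading off $\mathcal{L}=L_1=\Delta_1+\chi_1$ then yields exactly the asserted decomposition, so I would simply set $\Delta=\Delta_1$ and $\chi=\chi_1$.

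The point needing care is that Theorem \ref{thm1} is phrased for a full Lie higher derivation $\{L_n\}_{n\in\mathbb{N}}$, whereas over an arbitrary base ring $\mathcal{R}$ a single Lie derivation need not extend to such an infinite sequence (truncating by $L_n=0$ for $n\ge 2$ already fails the degree-two identity, since it would force $[\mathcal{L}(x),\mathcal{L}(y)]=0$). I would resolve this by observing that the degree-one conclusion of Theorem \ref{thm1} is precisely its base step, and that this base step draws only on the degree-$0$ and degree-$1$ data $(L_0,L_1)=(\mathrm{id}_{\mathfrak{A}},\mathcal{L})$, being an application of \cite[Theorem 3.1]{W19}; none of the higher terms $L_n$ with $n\ge 2$ enter the construction of $\mathfrak{A}^{0}$ or of the pair $(\Delta_1,\chi_1)$. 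Hence the argument goes through verbatim for the truncated data $(L_0,L_1)$, which is all that the corollary requires.
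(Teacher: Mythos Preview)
Your proposal is correct and matches the paper's approach: the paper gives no separate proof of this corollary, simply presenting it as the $n=1$ consequence of Theorem~\ref{thm1}. Your additional care about whether a single Lie derivation extends to a full Lie higher derivation is well placed, and your resolution---that only the base step of Theorem~\ref{thm1} is needed and that step uses only $(L_0,L_1)$---is exactly right; indeed, since that base step is itself a direct invocation of \cite[Theorem~3.1]{W19}, the corollary is really just a restatement of Wang's result rather than an independent deduction.
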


\section{Topics for further research}
Let $\mathcal{A}$ be a algebra over a unital commutative ring $\mathcal{R}.$ An $\mathcal{R}$-linear mapping $\Delta:\mathcal{A}\rightarrow \mathcal{A}$ is said to be a \textit{Lie triple derivation} if
$\Delta([[x, y],z]) = [[\Delta, y],z] + [[x, \Delta(y)],z]+[[x,y],\Delta(z)]$ for all $x, y,z\in \mathcal{A}$  holds for all $x, y\in \mathcal{A}$. Let $\mathbb{N}$ be the set of nonnegative integers and $\mathcal{D}=\{d_n\}_{n\in\mathbb{N}}$ be a family of $\mathcal{R}$-linear mappings $d_n:\mathcal{A}\rightarrow\mathcal{A}$ such that $d_0=id_{\mathcal{A}},$ the identity mapping on $\mathcal{A}.$ Then $\mathcal{D}$ is said to be a \textit{Lie triple higher derivation} on $\mathcal{A}$ if
$d_n([[x,y],z])=\sum\limits_{i+j+k=n}[[d_i(x),d_j(y)],d_k(z)]$ for all $x,y,z \in \mathcal{A}$ and  $n \in \mathbb{N}.$ Note that every Lie higher derivation is a Lie triple higher derivation. However, the converse statement is not true in general. If $\mathcal{D}=\{d_n\}_{n\in \mathbb{N}}$ is a higher derivation on $\mathcal{A}$ and $\mathcal{H}=\{h_n\}_{n\in \mathbb{N}}$ is a sequence of $\mathcal{R}$-linear mappings $h_n:\mathcal{A}\rightarrow\mathcal{Z}(\mathcal{A})$ such that  $h_n([[x,y],z])=0$ for all $x,y,z\in \mathcal{A}$ and $n \in \mathbb{N},$ then $\mathcal{D}+\mathcal{H}=\{d_{n}+h_n\}_{n\in \mathbb{N}}$ is a Lie triple higher derivation on $\mathcal{A},$ which is not necessarily a Lie higher derivation.

Lie triple (higher-) derivations in different background have been studied extensively (see \cite{aaa21,aj17,aj18,XW12(1)} and references therein). In 2012, Li and Shen \cite{LS12} showed that under certain assumptions every Lie higher derivation and Lie triple derivation on a triangular algebra are proper, respectively. Moafian and Ebrahimi Vishki \cite{MEv16} given the structure of Lie higher derivations on triangular algebras by the entries of matrices and obtained a similar conclusion as shown by Li and Shen \cite{LS12}. Ashraf and Jabeen \cite{aj18} studied nonlinear Lie triple higher derivations on triangular algebras and proved that under certain conditions every nonlinear Lie triple higher derivation  on a triangular algebra is proper. Recently, Ashraf et al. \cite{aaa21} gave a description of Lie triple derivations of arbitrary triangular algebras. In view of these results and our current work, we propose the following open problems:

\begin{con}\label{con1}
Let $\mathfrak{A}=\mathrm{{Tri}}(\mathcal{A},\mathcal{M},\mathcal{B})$ be a triangular algebra and $\mathcal{L}=\{L_n\}_{n\in\mathbb{N}}$ be a sequence of $\mathcal{R}$-linear mappings on $\mathfrak{A}.$ Then $\mathcal{L}$ is a Lie triple higher derivation on $\mathfrak{A}$ if and only if $L_n$ can be represented as
\begin{eqnarray*}
% \nonumber % Remove numbering (before each equation)
{\SMALL{  L_n\left(\left[
\begin{array}{cc}
a & m \\
0 & b \\
\end{array}
\right]\right)}}&=& {\SMALL{ \left[
\begin{array}{cc}
f_n(a)+p_n(b) & \sum\limits_{i+j=n,~~i\neq n}((f_i(a)+p_i(b))m_j-m_j(q_i(a)+g_i(b)))+h_n(m) \\
0 & q_n(a)+g_n(b) \\
\end{array}
\right]}}
\end{eqnarray*}
where $\{m_j\}_{j\in\mathbb{N}} \subseteq \mathcal{M},$  $f_n:\mathcal{A}\rightarrow\mathcal{A},$ $p_n:\mathcal{B}\rightarrow [\mathcal{A},\mathcal{A}]^{\prime},$ $h_n:\mathcal{M}\rightarrow\mathcal{M},$ $q_n:\mathcal{A}\rightarrow [\mathcal{B},\mathcal{B}]^{\prime},$ $g_n:\mathcal{B}\rightarrow\mathcal{B}$ are linear mappings satisfying
\begin{enumerate}
  \item[$(i)$] $\{f_n\}_{n\in\mathbb{N}}$ is a Lie triple higher derivation on $\mathcal{A}$, $h_n(am)=\sum\limits_{i+j=n} (f_i(a)h_j(m)-h_j(m)q_i(a))$ for all $a\in\mathcal{A},$ $m\in\mathcal{M};$
  \item[$(ii)$] $\{g_n\}_{n\in\mathbb{N}}$ is a Lie triple higher derivation on $\mathcal{B}$, $h_n(mb)=\sum\limits_{i+j=n} (h_i(m)g_j(b)-p_j(b)h_i(m))$ for all $m\in\mathcal{M},$ $b\in\mathcal{B};$
  \item[$(iii)$]  $[[p_n(b),a_1],a_2]=0,$ $p_n([[b_1,b_2],b_3])=0,$ $[[q_n(a),b_1],b_2]=0,$  $q_n([[a_1,a_2],a_3])=0$ for all $a,a_1,a_2,a_3 \in \mathcal{A},$ $b,b_1,b_2,b_3 \in \mathcal{B}.$
\end{enumerate}
\end{con}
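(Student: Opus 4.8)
The plan is to prove both implications, treating the ``if'' part as a direct verification and the ``only if'' part by induction on $n$, closely paralleling the argument for Lie higher derivations in \cite[Theorem 2.1]{MEv16} but carried out at the level of triple commutators. For the ``if'' part one substitutes the displayed block form of each $L_n$ into the two sides of the defining identity $L_n([[x,y],z])=\sum_{i+j+k=n}[[L_i(x),L_j(y)],L_k(z)]$ and checks equality blockwise; here the vanishing relation $[e\mathfrak{A}e,f\mathfrak{A}f]=0$, the module compatibilities $(i)$ and $(ii)$, and the triple-annihilation conditions $(iii)$ are exactly what is needed to make the three blocks agree. This direction is routine but lengthy, the only real care being the bookkeeping of the convolution sum $\sum_{i+j+k=n}$.

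For the ``only if'' part I would induct on $n$. The cases $n=0$, where $L_0=\mathrm{id}_{\mathfrak{A}}$ forces $f_0=\mathrm{id}_{\mathcal{A}}$, $g_0=\mathrm{id}_{\mathcal{B}}$, $h_0=\mathrm{id}_{\mathcal{M}}$ and $p_0=q_0=0$, and $n=1$, where $L_1$ is an ordinary Lie triple derivation whose block form either follows from the description of Lie triple derivations of arbitrary triangular algebras in \cite{aaa21} or is recovered as the first instance of the general method below, provide the base. Assume the stated representation holds for every $L_r$ with $r<n$. Writing an arbitrary element as $x=eae+emf+fbf$ and using linearity, I expand $L_n$ into its nine block components, so that $L_n$ carries each of $\mathcal{A},\mathcal{M},\mathcal{B}$ into each of $e\mathfrak{A}e\cong\mathcal{A}$, $e\mathfrak{A}f\cong\mathcal{M}$, $f\mathfrak{A}f\cong\mathcal{B}$. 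The inductive hypothesis then lets me replace $L_0,\dots,L_{n-1}$ on the right-hand side of the defining identity by their known matrix forms, so that the $n$-th identity collapses to a system of relations involving only the top-order component maps together with explicit lower-order data.

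The heart of the argument is to feed suitably chosen triples into this reduced identity. Triples drawn entirely from $e\mathfrak{A}e$ isolate the $\mathcal{A}\to\mathcal{A}$ component and show it is a Lie triple higher derivation $\{f_n\}$ on $\mathcal{A}$, while the $\mathcal{A}\to\mathcal{B}$ component $q_n$ is forced to kill triple commutators, giving $q_n([[a_1,a_2],a_3])=0$; the symmetric choice in $f\mathfrak{A}f$ yields $\{g_n\}$ and $p_n([[b_1,b_2],b_3])=0$. Triples mixing a diagonal block with the module $e\mathfrak{A}f$, together with the faithfulness of $\mathcal{M}$ as a left $\mathcal{A}$- and right $\mathcal{B}$-module, annihilate the components absent from the claimed form (the $\mathcal{M}\to\mathcal{A}$ and $\mathcal{M}\to\mathcal{B}$ parts) and produce the compatibility relations $(i)$ and $(ii)$ linking $h_n$ to $f_n,g_n,p_n,q_n$. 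Finally, triples placing arguments in different diagonal blocks, where cross-block commutators such as $[eae,fbf]=0$ make the left-hand side vanish, are what force the remaining conditions $[[p_n(b),a_1],a_2]=0$ and $[[q_n(a),b_1],b_2]=0$; it is precisely here that the triple (rather than double) commutator structure is essential, since the double-commutator version would only give the weaker centrality of $p_n(b)$ and $q_n(a)$.

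I expect the main obstacle to be the bookkeeping in the inductive step: the triple convolution $\sum_{i+j+k=n}$ generates a large number of cross terms, and after the lower-order maps are replaced by their inductive block forms one must verify that all lower-order contributions reassemble into exactly the terms appearing in the claimed formula, leaving behind only the genuine $n$-th order relations. A related subtlety is the treatment of the module constants $\{m_j\}\subseteq\mathcal{M}$: one must check that the sequence extracted at stage $n$ agrees with those fixed at earlier stages and that the off-diagonal entry of $L_n$ indeed splits as $\sum_{i+j=n,\,i\neq n}((f_i(a)+p_i(b))m_j-m_j(q_i(a)+g_i(b)))+h_n(m)$ rather than in some less symmetric way. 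Once this cancellation is organized, the derivation of $(i)$--$(iii)$ follows the pattern of \cite{MEv16,aaa21} with triple commutators in place of single ones.
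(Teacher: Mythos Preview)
The statement you are addressing is not proved in the paper: it appears in Section~4 (``Topics for further research'') as Conjecture~1, explicitly introduced as an \emph{open problem}. There is therefore no proof in the paper to compare your proposal against.

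Your outline is the natural adaptation of the Lie higher derivation argument (Moafian--Ebrahimi Vishki \cite{MEv16}, and Lemma~\ref{lem3.1} and Theorem~\ref{thm1} of the present paper) to the triple-commutator setting, and the overall architecture---induction on $n$, nine-block decomposition of $L_n$, judicious choices of test triples, and use of faithfulness of $\mathcal{M}$---is exactly what one would attempt. That said, the authors presumably did not carry this out for a reason. The places where your sketch is most vulnerable are precisely those you flag yourself: the triple convolution $\sum_{i+j+k=n}$ after substituting the inductive block forms produces many mixed terms, and one must show that the lower-order debris reassembles \emph{exactly} into the off-diagonal expression $\sum_{i+j=n,\,i\neq n}((f_i(a)+p_i(b))m_j-m_j(q_i(a)+g_i(b)))$ with a \emph{single} sequence $\{m_j\}$ consistent across all $n$. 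In the Lie higher case this works because $[e\mathfrak{A}e,f\mathfrak{A}f]=0$ kills enough cross terms at the double-commutator level; at the triple level the analogous cancellations are more delicate, and in particular the step that pins down the off-diagonal components of $L_n(eae)$ and $L_n(fbf)$ (forcing them to be governed by $m_j$ rather than by new, independent module data at each stage) is not obviously guaranteed. A second issue is that for Lie triple derivations the ranges of $p_n,q_n$ are only the commutants $[\mathcal{A},\mathcal{A}]'$ and $[\mathcal{B},\mathcal{B}]'$ rather than the centers, so the algebraic identities available to simplify products like $p_i(b)h_j(m)$ are weaker than in the Lie higher case; you should check carefully that this weaker information still suffices to close the induction. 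Until these two points are nailed down, the proposal remains a plausible strategy rather than a proof.
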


 \begin{con}\label{con2}
Let $\mathfrak{A}=\mathrm{{Tri}}(\mathcal{A},\mathcal{M},\mathcal{B})$ be a triangular algebra and $\mathcal{L}=\{L_n\}_{n\in\mathbb{N}}$ be a Lie triple higher derivation on $\mathfrak{A}$. Then, there exists a triangular algebra $\mathfrak{A}^{0}$ such that $\mathfrak{A}$ is a subalgebra of $\mathfrak{A}^{0}$ having the same identity and $L_n$ can be written as $L_n= \Delta_n +\chi_n$ for each $n\in\mathbb{N},$  where $\{\Delta_n\}_{n\in\mathbb{N}},$ $\Delta_n:\mathfrak{A}\rightarrow\mathfrak{A}^{0}$ is a higher derivation and $\{\chi_n\}_{n\in\mathbb{N}}$ is a sequence of linear mappings $\chi_n:\mathfrak{A}\rightarrow\mathcal{Z}(\mathfrak{A}^{0})$ such that $\chi_n([[x,y],z])=0$ for all $x,y,z\in \mathfrak{A}$ and for each $n\in\mathbb{N}.$
\end{con}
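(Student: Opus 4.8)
The plan is to prove Conjecture \ref{con2} by induction on $n$, running in close parallel to the proof of Theorem \ref{thm1}, but with every ordinary-commutator identity replaced by its triple-commutator analogue and with Conjecture \ref{con1} playing the role that \cite[Theorem 2.1]{MEv16} played in the Lie higher derivation case. First I would fix the ambient algebra $\mathfrak{A}^{0}=\mathrm{{Tri}}(\mathcal{A}{\tau_r}^{-1}(\mathcal{Z}(\mathcal{B})),\mathcal{M},\mathcal{B}{\tau_{\ell}}^{-1}(\mathcal{Z}(\mathcal{A})))$, which by Lemma \ref{lem2.2} is a triangular algebra containing $\mathfrak{A}$ as a subalgebra with the same identity. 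For the base case $n=1$, $L_1$ is an ordinary Lie triple derivation on $\mathfrak{A}$, and I would invoke the description of Lie triple derivations of arbitrary triangular algebras from \cite{aaa21} to write $L_1=\Delta_1+\chi_1$ with $\Delta_1$ a derivation into $\mathfrak{A}^{0}$ and $\chi_1$ central-valued with $\chi_1([[x,y],z])=0$. The induction hypothesis is that for every $r<n$ one has $L_r=\Delta_r+\chi_r$ of the required form, so that $\{\Delta_i\}_{i=0}^{r}$ is a higher derivation; applying Lemma \ref{lem3.1} then gives the explicit matrix form of each $\Delta_r$ in terms of $d_i,d_i^\prime,h_i$.

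The central step is to extract the structural form of $L_n$. Assuming Conjecture \ref{con1}, $L_n$ has the stated five-block form governed by the maps $f_n,p_n,h_n,q_n,g_n$ satisfying conditions $(i)$--$(iii)$ there. I would then convert the module identities $(i)$ and $(ii)$ of Conjecture \ref{con1} into the cleaner forms
\begin{eqnarray*}
h_n(am)&=&\sum\limits_{i+j=n}(f_i(a)-{\tau_r}^{-1}(q_i(a)))h_j(m),\\
h_n(mb)&=&\sum\limits_{i+j=n}h_i(m)(g_j(b)-\tau_{\ell}(p_j(b))),
\end{eqnarray*}
exactly as in equations $(\ref{eqn3.3})$--$(\ref{eqn3.4})$, using Lemma \ref{lem2.1}$(v),(vii)$ and $(iv),(vi)$ to move the central scalars ${\tau_r}^{-1}(q_i(a))$ and $\tau_{\ell}(p_j(b))$ across the bimodule $\mathcal{M}$. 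Setting $d_n=f_n-{\tau_r}^{-1}\circ q_n$ and $d_n^\prime=g_n-\tau_{\ell}\circ p_n$, I would define
\begin{eqnarray*}
\Delta_n\left(\left[\begin{array}{cc}a & m\\0 & b\\\end{array}\right]\right)&=&\left[\begin{array}{cc}d_n(a) & \sum\limits_{i+j=n,~i\neq n}(d_i(a)m_j-m_jd_i^\prime(b))+h_n(m)\\0 & d_n^\prime(b)\\\end{array}\right]
\end{eqnarray*}
and let $\chi_n=L_n-\Delta_n$, which by the same scalar-transport computation lands in $\mathcal{Z}(\mathfrak{A}^{0})$ via Lemma \ref{lem2.1}$(i)$. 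The multiplicativity $d_n(aa^\prime)=\sum_{p+q=n}d_p(a)d_q(a^\prime)$ and its $\mathcal{B}$-analogue follow verbatim from the two computations of $h_n(aa^\prime m)$ given in the proof of Theorem \ref{thm1}, together with faithfulness of $\mathcal{M}$; these computations use only the module identities $(\ref{eqn3.6})$--$(\ref{eqn3.7})$, which survive the passage from ordinary to triple commutators unchanged. Lemma \ref{lem3.1} then certifies that $\{\Delta_i\}_{i=0}^{n}$ is a higher derivation.

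The genuinely new and hardest part is the final verification that $\chi_n([[x,y],z])=0$, since the telescoping argument of Theorem \ref{thm1} relied on the single-commutator Leibniz rule $L_n([x,y])=\sum_{i+j=n}[L_i(x),L_j(y)]$. Here I would instead start from the defining identity $L_n([[x,y],z])=\sum_{i+j+k=n}[[L_i(x),L_j(y)],L_k(z)]$, substitute $L_i=\Delta_i+\chi_i$ throughout, and observe that because each $\chi_i$ is central in $\mathfrak{A}^{0}$, every triple commutator in which a $\chi_i$ appears in any of the three inner slots collapses to zero; the sum therefore reduces to $\sum_{i+j+k=n}[[\Delta_i(x),\Delta_j(y)],\Delta_k(z)]$, which equals $\Delta_n([[x,y],z])$ precisely because $\{\Delta_i\}_{i=0}^{n}$ is a higher derivation, hence a Lie triple higher derivation. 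Subtracting $\Delta_n([[x,y],z])$ from $L_n([[x,y],z])$ then yields $\chi_n([[x,y],z])=0$, completing the induction. The main obstacle is thus not the centrality bookkeeping, which is mechanical, but the fact that the whole scheme is conditional on Conjecture \ref{con1}: without an independent proof that an arbitrary Lie triple higher derivation on $\mathfrak{A}$ must have the five-block form with properties $(i)$--$(iii)$, the decomposition cannot even be initiated, so establishing Conjecture \ref{con1} is the real crux and should be attacked first by a separate induction that isolates the diagonal corners $e\mathfrak{A}e$ and $f\mathfrak{A}f$ and uses the triple-commutator relations on $e\mathfrak{A}f$ to pin down $h_n,p_n,q_n$.
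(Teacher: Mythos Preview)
The paper does not prove this statement: it is explicitly posed as an open problem (Conjecture~\ref{con2}) in the final section, so there is no proof in the paper to compare your proposal against. Your outline is a sensible attempt to transport the machinery of Theorem~\ref{thm1} to the triple-commutator setting, and you are right that the entire scheme hinges on Conjecture~\ref{con1}.

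There is, however, a genuine gap beyond the conditionality on Conjecture~\ref{con1} that you do not flag. In the Lie higher derivation case the structural result \cite[Theorem~2.1]{MEv16} gives $p_n:\mathcal{B}\to\mathcal{Z}(\mathcal{A})$ and $q_n:\mathcal{A}\to\mathcal{Z}(\mathcal{B})$, and it is precisely this \emph{centrality} that permits the scalar-transport step: Lemma~\ref{lem2.1}$(iv),(v)$ embed $\mathcal{Z}(\mathcal{A})$ and $\mathcal{Z}(\mathcal{B})$ into $C_\ell(\mathfrak{A})e$ and $C_r(\mathfrak{A})f$, which is what makes $\tau_r^{-1}(q_i(a))$ and $\tau_\ell(p_j(b))$ well-defined and allows you to pass from the raw module identities to $(\ref{eqn3.3})$--$(\ref{eqn3.4})$. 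In Conjecture~\ref{con1}, by contrast, the ranges are only $[\mathcal{A},\mathcal{A}]^{\prime}$ and $[\mathcal{B},\mathcal{B}]^{\prime}$, the commutants of the derived subalgebras, and condition $(iii)$ there asserts merely $[[p_n(b),a_1],a_2]=0$ and $[[q_n(a),b_1],b_2]=0$. These sets strictly contain the centers in general, and there is no reason for $[\mathcal{B},\mathcal{B}]^{\prime}$ to sit inside $C_r(\mathfrak{A})f$; hence $\tau_r^{-1}(q_i(a))$ need not exist, the conversion to $(\ref{eqn3.3})$--$(\ref{eqn3.4})$ breaks down, the definitions $d_n=f_n-\tau_r^{-1}\circ q_n$ and $d_n^{\prime}=g_n-\tau_\ell\circ p_n$ are not available, and consequently neither $\Delta_n$ nor $\chi_n$ can be constructed as written. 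The same obstruction blocks the verification that $\chi_n$ is $\mathcal{Z}(\mathfrak{A}^0)$-valued.

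This failure of centrality is exactly the well-known obstruction that makes Lie triple problems harder than their Lie analogues, and it is presumably why the authors recorded the statement as a conjecture rather than a corollary of their method. To salvage your approach you would need either an additional argument forcing $q_n(\mathcal{A})\subseteq\mathcal{Z}(\mathcal{B})$ and $p_n(\mathcal{B})\subseteq\mathcal{Z}(\mathcal{A})$ from the module identities themselves (which does not follow from Conjecture~\ref{con1} as stated), or a genuinely larger ambient algebra $\mathfrak{A}^{0}$ together with extended transport maps accommodating $[\mathcal{A},\mathcal{A}]^{\prime}$ and $[\mathcal{B},\mathcal{B}]^{\prime}$; neither is addressed in your proposal.
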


\end{document}